\newcommand{\ord}{{\mathbf{ord}}}
\newcommand{\nat}{\bbbn}
\newcommand{\ra}{\rightarrow}
\newtheorem{Lemma}{Lemma}
\begin{document}
\mainmatter
\title{Unary Patterns of Size Four with Morphic Permutations}

\titlerunning{Patterns of Size Four with Morphic Permutations}

\tocauthor{Kamellia Reshadi}

\author{Kamellia Reshadi}

\authorrunning{K. Reshadi}

\institute{Institut f{\"u}r Informatik, Christian-Albrechts-Universit{\"a}t zu Kiel.\\
\email{kre@informatik.uni-kiel.de}
}

\maketitle
\begin{abstract}
We investigate the avoidability of unary patterns of size of four with morphic permutations. More precisely, we show that, for the positive integers $i,j,k$, the sizes of the alphabets over which a pattern $x \pi ^ {i} (x) \pi^{j}(x) \pi^{k}(x)$ is avoidable are an interval of the integers (where $x$ is a word variable and $\pi$ is a function variable with values in the set of all morphic permutations of the respective alphabets). We also show how to compute a good approximation of this interval.
This continues the work of  [Manea et al., 2015], where a complete characterisation of the avoidability of cubic patterns with permutations was given.
\end{abstract}

\section{Introduction}
The avoidability of patterns in infinite words is an old area of interest with a~first systematic study going back to Thue~\cite{Thue:06,Thue:12}. In these initial papers it was shown that there exist a binary infinite morphic word and a ternary infinite morphic word that avoid cubes and squares, respectively. That is, these infinite words do not contain instances of the patterns $xxx$ and $xx$, respectively. The most important classical results on avoidability are surveyed in several chapters of \cite{Loth97}; see, e.g., Chapters 2 and 7 of \cite{Loth97} and the references therein. 

In this article, we are studying the avoidability of repetitions in a generalised setting. Namely, we are interested in the avoidability of unary patterns with functional dependencies between variables. We are considering patterns like $x \pi^i(x)\pi^j(x)\pi^k(x),$ where $x$ is a word variable while $\pi$ is function variable, which can be replaced by bijective morphisms only.
The instances of such patterns over an alphabet $\Sigma$ are obtained by replacing $x$ with a concrete word, and $\pi$ by a morphic permutation of $\Sigma$. For example, an instance of the pattern $x\pi(x)x\pi(x)$ over $\Sigma=\{a,b\}$ is the word $uvuv$ such that $|u|=|v|,$ and $v$ is the image of $u$ under any permutation on the alphabet. Considering the permutation $a \to b$, and $b \to a$, then $aba|bab|aba|bab$ is an instance of $x \pi(x) x \pi(x).$ 

In this setting, we continue the work of~\cite{DLT2015}  and~\cite{MMN12} as follows. In~\cite{MMN12}, a complete characterisation of the avoidability of cubic patterns with permutations $x\pi^i(x)\pi^j(x)$ was given. Furthermore, in~\cite{DLT2015}  it was shown  that  there exists  a ternary word that avoids all patterns $\pi_{i_1}(x)\ldots \pi_{i_r}(x)$ where $r\geq 4$, $x$ a word variable over some alphabet $\Sigma$, with $|x|\geq 2$ and $|\Sigma|\geq 3$, and the $\pi_{i_j}$ function variables that may be replaced by anti-/morphic permutations of $\Sigma$. However, this result only holds when the length of $x$ is restricted to be at least~$2$. Also, in an extension \cite{TCS2018} of the aforementioned paper \cite{DLT2015}, it was shown that all patterns $\pi^{i_1}(x)\ldots \pi^{i_n}(x)$ with $n\geq 4$ under morphic permutations are avoidable in alphabets of size $2,3,$ and $4$, but there exist patterns which are unavoidable in alphabets of size $5$. We extend these results by showing how to determine exactly, for a given unary pattern ${\mathcal P}$ of size four with permutations, which are the alphabets in which it is avoidable.

The main result of our paper is that given $i,j,k \geq 0$, we show how to compute the value $\sigma$ such that the pattern $x \pi^{i}(x) \pi^{j}(x) \pi^{k}(x)$ is unavoidable in alphabets of size at least $\sigma+1$ and avoidable in alphabets of size $2,3,4,\ldots, \sigma-1$. The avoidability of this pattern in alphabets of size $\sigma$ has to be analysed individually for some $i,j,k$. Acoordingly, we show that for each pattern there exists an interval (whose left end is $2$ and right end is defined based on the respective parameters) such that over each alphabet whose size is in the respective interval, there exists an infinite word that does not contain instances of the given pattern. This shows that the main result of~\cite{MMN12} holds in the more general case of unary patterns of size four. However, the technicalities we develop here are much more involved.

The structure of the paper is as follows: we first give a series of basic definitions and preliminary results. Then we define the aforementioned parameters, and show how to use them to compute, for a given pattern $p$, the value $\sigma$ such that $p$ is unavoidable over alphabets with $m>\sigma $ letters. Finally, we show the dual of the previous result: for alphabets with at most $\sigma-1$ symbols the pattern $p$ is avoidable. Due to space constraints, most of the repetitive technicalities of this paper (e.g., a list of infinite words avoiding certain patterns) are given in the Appendix. However, the main part contains the major ideas needed to obtain the results we state. 

\section{Preliminaries}
We define $\Sigma_k = \left\{ 0, \dots, k-1 \right\}$ to be an alphabet with $k$ letters; the empty word is denoted by $\varepsilon$.
For words $u$ and $w$, we say that $u$ is a prefix (resp. suffix) of $w$, if there exists a word $v$ such that $w = uv$ (resp. $w = vu$).
If $f:\Sigma_k \ra \Sigma_k$ is a permutation, we say that the order of $f$, denoted $\ord(f)$, is the minimum value $m>0$ such that $f^m$ is the identity. If $a\in \Sigma_k$ is a letter, the order of $a$ with respect to $f$, denoted $\ord_f(a)$, is the minimum number $m$ such that $f^m(a)=a$. A function $f:\Sigma_k^* \ra \Sigma_k^*$ is a morphism if $f(xy)=f(x)f(y)$ for all words $x,y$; $f$~is a morphic permutation if the restriction of $f$ to $\Sigma_k$ is a permutation of $\Sigma_k$.

A pattern with functional dependencies is a term over (word) variables and function variables (where concatenation is an implicit functional constant). For example, $x \pi(y) \pi(\pi(x)) y$ is a pattern involving the variables $x$ and $y$ and the function variable $\pi$. An instance of a pattern $p$ in $\Sigma_k$ is the result of substituting uniformly every variable by a word in $\Sigma_k^+$ and every function variable by a function over $\Sigma_k^*$. A pattern is avoidable in $\Sigma_k$ if there is an infinite word over $\Sigma_k$ that does not contain any instance of the pattern. 

In this paper, we consider only unary patterns (i.e., containing only one variable) with morphic permutations, that is, all function variables are unary and are substituted by morphic  permutations only. 

The infinite Thue-Morse word $t$ is defined as 
$t = \lim_{n \to \infty} \phi_t^n(0),$ 
for the morphism $\phi_t : \Sigma_2^* \to \Sigma_2^*$ where $\phi_t(0)=01$ and $\phi_t(1)=10$. It is well-known (see \cite{Loth97}) that the word $t$ avoids the patterns $xxx$ (cubes) and $xyxyx$ (overlaps).

The infinite ternary Thue word $h$ is defined as
$h = \lim_{n \to \infty} \phi_h^n(0),$ 
for the morphism $\phi_h : \Sigma_3^* \to \Sigma_3^*$ where $\phi_h(0)=012$, $\phi_h(1)=02$ and $\phi_h(2)=1$. 
The infinite word $h$ avoids the pattern $xx$ (squares).



This paper is related to the study of the avoidability of cubic patterns with permutations from~\cite{MMN12}. In the respective paper for a given pattern $x\pi^{i}(x)\pi^{j}(x)$ the authors defined the following four values: $\alpha_1 = \inf \{ t: t \nmid |i-j|, t \nmid i, t \nmid j \}$, $\alpha_2 = \inf \{ t: t \mid |i-j|, t \nmid i, t \nmid j \}, \alpha_3= \inf \{ t: t \mid i, t \nmid j \}, \alpha_4= \inf \{ t: t \nmid i, t \mid j \}.$ Further, for $k=\min \{ \max \{\alpha_1,\alpha_2\}, \max \{\alpha_1,\alpha_3\}, \max \{\alpha_1, \alpha_4 \}\}$, it was shown that  $x\pi^{i}(x)\pi^{j}(x)$ is unavoidable in $\Sigma_m$, for $m \geq k$, and avoidable in $\Sigma_m$, for $4\leq m<k$. The avoidability of  $x\pi^{i}(x)\pi^{j}(x)$ in $\Sigma_2$ and $\Sigma_3$ was separately investigated, and a complete characterisation of the alphabets over which a pattern  $x\pi^{i}(x)\pi^{j}(x)$ is avoidable was obtained.

The reader is referred to \cite{Loth97,MMN12,TCS2018} for  further details. 
All computer programs referenced in this paper can be found at \url{http://media.informatik.uni-kiel.de/zs/patterns.zip}.

\section{Avoidability of patterns under permutations}
In this section we try to identify an upper bound on the size of the alphabets $\Sigma_m$ in which a pattern $x \pi^{i}(x) \pi^{j}(x) \pi^{k}(x)$, with $i, j, k \geq 0$ is unavoidable, when $\pi$ is substituted by a morphic permutation. 

In the pattern $x\pi^i{(x)}\pi^j{(x)}\pi^k{(x)}$, the factors $x$, $\pi^i(x)$, $\pi^j(x)$, or $\pi^k(x)$ are called $x$-items in the following. Our analysis is based on the relation between the possible images of the four $x$-items occurring in a pattern, following the ideas of \cite{MMN12}. For instance, we want to check whether in a possible image of our pattern, all four $x$-items can be mapped to a different word, or whether the second and the last $x$-items can be mapped to the same word, etc.

To achieve this, we define in Table \ref{table_k} the parameters $\alpha_a$, with $1\leq a \leq 14$. Intuitively, they allow us to define, for a pattern $x\pi^i{(x)}\pi^j{(x)}\pi^k{(x)}$, which are the alphabets $\Sigma_m$ in which we can model certain (in-)equality relationships between the images of the $x$-items. For example, in alphabets $\Sigma_m$ with $m\geq \alpha_1$ we can assign values to $x$ and $\pi$ such that the images of every two of $\pi^i{(x)}$, $\pi^j{(x)}$, and $\pi^k{(x)}$ are different (and this property does not hold in alphabets with less than $\alpha_1$ letters). Also, in $\Sigma_m$ with $m\geq \alpha_2$ we can assign values to $x$ and $\pi$ such that the images of $x$ and $\pi^i(x)$ are equal to some word, while the images of $\pi^j(x)$ and $\pi^k(x)$ are assigned to two other distinct words (also different between them; again, this property does not hold in smaller alphabets). To simplify, we use a simple digit-representation for any of these cases, defined in the last column of Table \ref{table_k}. In this representation of each $\alpha_a$, we assign different digits to the $x$-items that can be mapped to different words in alphabets of size at least $\alpha_a$. For example, we use the representation 0123 for the case defined through $\alpha_1$ and 0012 for the case defined by $\alpha_2$. In general, when considering an $\alpha_a$, we assign a $4$-digit representation to the pattern  $x\pi^i{(x)}\pi^j{(x)}\pi^k{(x)}$ in the following manner: we start with $0$, and then put a $0$ on all of the remaining three positions corresponding to an $x$-item $\pi^t(x)$ to such that $\alpha_a$ divides $t$. We then put a $1$ on the the leftmost empty position. If the $x$-item on the respective position is $\pi^r(x)$, we put $1$ on all empty positions $s$ such that $\alpha_a$ divides $(r-s)$, and so on. 

Please note that the actual values the parameters $\alpha_a$, with $1\leq a\leq 14$, take depend on the pattern $x\pi^i{(x)}\pi^j{(x)}\pi^k{(x)}$, and, more precisely, on $i,j,k$. Thus, for different patterns we will have different parameters.

\begin{table}
\begin{center}
  \begin{tabular}{| l | c|} 
    \hline
    $\alpha_1 = \inf \{ t: t \nmid i, t \nmid j, t \nmid k, t \nmid |i-j|, t \nmid |i-k|, t \nmid |j-k|\}$ & 0123 \\ \hline
    $\alpha_2 = \inf \{ t: t \mid i, t \nmid j, t \nmid k,  t \nmid |j-k|\}$ & 0012 \\ \hline
    $\alpha_3 = \inf \{ t: t \nmid i, t \mid j, t \nmid k, t \nmid |i-k|\}$ & 0102 \\ \hline
    $\alpha_4 = \inf \{ t: t \nmid i, t \nmid j, t \mid |i-k|\}$ & 0121 \\ \hline
    $\alpha_5 = \inf \{ t: t \nmid i, t \nmid j, t \nmid |i-j|, t \nmid |i-k|, t \mid |j-k|\}$ & 0122 \\ \hline
    $\alpha_6 = \inf \{ t: t \mid i, t \mid j, t \nmid k\}$ & 0001 \\ \hline
    $\alpha_7 = \inf \{ t: t \mid i, t \nmid j, t \mid k\}$ & 0010 \\ \hline
    $\alpha_8 = \inf \{ t: t \nmid i, t \mid j, t \mid k\}$ & 0100 \\ \hline
    $\alpha_9 = \inf \{ t: t \nmid i, t \mid|i-j|, t \mid |i-k| \}$ &  0111 \\ \hline
    $\alpha_{10} = \inf \{ t: t \mid i,  t \nmid j, t \mid|j-k| \}$ & 0011 \\ \hline
    $\alpha_{11} =  \inf \{ t: t \nmid i,  t \mid j, t \mid|i-k| \}$ & 0101 \\ \hline
    $\alpha_{12} =  \inf \{ t: t \nmid i,  t \mid k, t \mid|i-j| \}$ & 0110 \\ \hline
    $\alpha_{13} =  \inf \{ t: t \nmid i,  t \nmid k, t \mid|i-j| \}$ & 0112 \\ \hline
    $\alpha_{14} =   \inf \{ t: t \nmid i,  t \nmid j, t \mid|i-j| \}$ & 0120 \\ \hline
  \end{tabular}
\end{center}
\caption{Definition of the values $\alpha_a$, with $1\leq a\leq 14$. }
\vspace*{-1cm}
\label{table_k}
\end{table}

Recall that $\inf \varnothing =\infty$, so the value of some $\alpha_a$s may be infinite. However, note that the set $ \{ t: t \nmid i, t \nmid j, t \nmid k, t \nmid |i-j|, t \nmid |i-k|, t \nmid |j-k|\}$ defining $\alpha_1$ is always non-empty, and also that $\alpha_1 > 3$. Indeed, at least two of $i,j,k$ have the same parity, so $\alpha_1$ should not divide $2$. Similarly, out of $0,i,j,k$ at least two have the same reminder modulo $3$, so $\alpha_1$ should also not divide $3$. Let $K=\{\alpha_1,\alpha_2,\ldots,\alpha_{14}\}$.

For a pattern $x\pi^i(x)\pi^j(x)\pi^k(x)$, we say that one of the numbers $\alpha_a$ (and its corresponding representation) models an instance $uf^i(u)f^j(u)f^k(u)$ of the pattern in the case when two of the factors $u,f^i(u),f^j(u),f^k(u)$ are equal if and only if the digits associated to the respective factors in the representation of $\alpha_a$ are equal. An infinite word $w$ over some alphabet $\Sigma$ avoids a set $S\subseteq K$ if $w$ contains no instance of the pattern $x \pi^i(x)\pi^j(x)\pi^k(x)$ that is modelled by the parameters of $S$; note that when we discuss about words avoiding a set of parameters, we implicitly assume that the pattern $x \pi^i(x)\pi^j(x)\pi^k(x)$ is fixed. 

Before showing our first results, we need several new notations.

Let $w_1$ and $w_2$ be the digit representation of some $\alpha_{\ell}$, and $\alpha_p$ respectively, with $\ell, p \geq 1$, we say that $w_1$ is a swapped form of $w_2$ if there exists a position $i \leq 4$ such that $w_{1}[i] = w_{2}[i+1]$, and $w_{1}[i+1]= w_{2}[i]$, and $w_{1}[j]=w_{2}[j]$ for all $j \notin \{ i, i+1 \}$. For instance, $0012$ and $0102$ are swapped forms of each others. 

Let $\alpha$ be the digit representation of $x\pi^i(x)\pi^j(x)\pi^k(x)$. We say that $\alpha$ has a prefix square if it starts with $00$, while the other two digits are $1$ and $2$; this is the case for $0012=\alpha_2$. Furthermore, a digit representation has a suffix square if it ends with $22$ and the two other digits are $0$ and $1$; this is the case for $0122=\alpha_5$. We say that $\alpha$ has a gapped square, if it is $0102$, where the $0$s form the gapped square, or if it is $0121$, where the $1$s form the gapped square. We say that $\alpha$ contains a cube if it is $0001$ or $0111$. We say $\alpha$ has two squares if it is $0011$. Finally, $\alpha$ contains gapped cubes if it is $0010$ or $0100$.

Now based on these relations, we define the following collections of sets. The idea behind all these collections is to generate sets of parameters $\alpha_a$s that cannot be avoided and have a minimal cardinality. No matter what will be added to these sets, they will preserve their unavoidability, while erasing something from them will make them avoidable. To obtain these collections we used a computer program and randomly generated some unavoidable sets of parameters of size five. Using the similarities between the instances modelled by these sets, defined in terms of (gapped) squares and cubes occurring in their digit representation, we developed an algorithm to generate more sets of patterns. 

Let $\mathcal{S}_1$ be the collection of sets (each with five elements) that  contain $\alpha_1$ and:
\begin{itemize}
\item one of the $\alpha_a$s whose representation has a prefix or a suffix square, but no gapped cube. That is: $\alpha_2$ or $\alpha_5$.
\item one of the $\alpha_a$s that has a gapped square, but does not have two gapped squares. These are $\alpha_3$ or $\alpha_4$.
\item one of the $\alpha_a$s that contains cubes or two squares: $\alpha_6$ or $\alpha_9$ or $\alpha_{10}$.
\item one of the $\alpha_a$s that contains gapped cubes: $\alpha_7$ or $\alpha_8$.
\end{itemize} 
For example, one possible set from $\mathcal{S}_1$ is $\{\alpha_1,\alpha_2,\alpha_4,\alpha_6,\alpha_7\}$. Note that more sets like this one can be constructed using this scheme, and we should consider all of them, but because of lack of the space, we do not list all the examples here. 

We also have the restriction that if the representations of the squares and gapped squares of a set from $\mathcal{S}_1$ are not swapped form of each other, then the elements of $\mathcal{S}_1$ representing cubes or gapped cubes should have the same digit on all positions of equal digits from the representations of squares and gapped squares. For example, in the case of $0012$ we have that the first and second position contain the same digit and for $0121$ we have that the second and the last position contain the same digit, so our gapped cube should be $0010$ meaning that the first, second and the last position should contain the same digits. 

Let $\mathcal{S}_2$ be the collection of sets (with five elements) that contain $\alpha_1$ and:
\begin{itemize}
\item one of the $\alpha_a$s of the set $\{\alpha_2$, $\alpha_3$, $\alpha_4 \}$, and 
\item one of the $\alpha_a$s of the set $\{\alpha_6$, $\alpha_7$, $\alpha_9 \}$, and
\item both $\alpha_a$s that contain a square in the middle of the word $(\alpha_{12}$ and $\alpha_{13})$.
\end{itemize}
Moreover, we have the restriction that if we choose $\alpha_2$ then $\alpha_7$ should be added to the set. For example, one possible set from $\mathcal{S}_2$ is $\{ \alpha_1, \alpha_2, \alpha_7, \alpha_{12}, \alpha_{13} \}$.


Let $\mathcal{S}_3$ be the collection of sets (with five elements) that contain $\alpha_1$ and $\alpha_{10}$ (the only $\alpha_a$ that has two square factors) as well as:
\begin{itemize}
\item one of the $\alpha_a$s whose representation has a prefix or a suffix square, but no gapped cube. That is: $\alpha_2$ or $\alpha_5$.
\item one of the $\alpha_a$s whose representation has a gapped square, but does not have two gapped squares. That is: $\alpha_3$ or $\alpha_4$.
\item one of the $\alpha_a$s  whose representation contains gapped cubes: $\alpha_7$ or $\alpha_8.$
\end{itemize}
We also have the restriction that if the representation of the squares and gapped squares of a set from $\mathcal{S}_3$ are not swapped form of each other, then its elements representing cubes or gapped cubes should have the same digit on all positions of equal digits from the representations of squares and gapped squares. For example, one possible set from $\mathcal{S}_3$ is  $\{ \alpha_1, \alpha_2, \alpha_4, \alpha_7, \alpha_{10} \}$.

Let $\mathcal{S}_4$ be the collection of sets that contain $\alpha_1,$ $\alpha_2$, $\alpha_7$, and:
\begin{itemize}
\item  one of the $\alpha_a$s whose representations contain cubes ($\alpha_6$ or $\alpha_9$), or  two square  $(\alpha_{10})$ and
\item one of the $\alpha_a$s for whose representation only the first and last digits are equal, and they are different from all other digits $(\alpha_{14})$.
\end{itemize}
One such set is, for example, $\{ \alpha_1, \alpha_2, \alpha_7, \alpha_{10}, \alpha_{14} \}$.

Let $\mathcal{S}_5$ be the collection of sets that contain $\alpha_1$, $\alpha_{12}$, $\alpha_{13}$, and $\alpha_{14}$, as well as one of the $\alpha_a$s  whose representation contains cubes $(\alpha_6$ or $\alpha_9$).
One example is $\{ \alpha_1, \alpha_9,\alpha_{12}, \alpha_{13}, \alpha_{14} \}$.


Let $\mathcal{S}_6$ be the collection of sets that contain $\alpha_1$, $\alpha_{10}$, $\alpha_{13}$, $\alpha_{14}$, and
\begin{itemize}
\item  one of the $\alpha_a$s whose representation contains a cube ($\alpha_6$ or $\alpha_9$), and
\item one of the $\alpha_a$s  whose representation contains a gapped cube $(\alpha_7$ or $\alpha_8)$, and
\item one of the $\alpha_a$s  whose representation contains a gapped square $(\alpha_3$ or $\alpha_4)$.
\end{itemize}
Here we have the restriction that if we have in a set of $\mathcal{S}_6$ the element $\alpha_7$, whose representation has on the first, the second and the last positions the same digit, we should add $\alpha_4$ whose second and last digit are the same. Furthermore, the presence of both $\alpha_4$ and $\alpha_8$ in a set is not permitted.
For example, one possible such set is
$\{ \alpha_1, \alpha_4, \alpha_6, \alpha_7, \alpha_9, \alpha_{10}, \alpha_{13}, \alpha_{14} \}$.

Let $\mathcal{S}_7$ be the collection of sets that contain $\alpha_1$, $\alpha_{12}$, $\alpha_{13}$, and one of the $\alpha_a$s of the set $\{ \alpha_2$, $\alpha_5\}$, and one of the $\alpha_a$s  of the set $\{ \alpha_3$, $\alpha_4 \}$, and one of the $\alpha_a$s of the set $\{ \alpha_7$, $\alpha_8 \}$. Here we have this restriction that the combination of $\{ \alpha_2, \alpha_4 \}$  and  $\{ \alpha_2, \alpha_7 \}$ is not permitted and if the $\alpha_a$s of a set from $\mathcal{S}_7$ whose representation contain squares and gapped squares are not swapped form of each other, then its elements representing cubes or gapped cubes should have the same digit on all positions of equal digits from the representations of $\alpha_a$s that contain squares and gapped squares. For example, one possible such set
$\{ \alpha_1, \alpha_3, \alpha_5, \alpha_8, \alpha_{12}, \alpha_{13} \}$.

Let $\mathcal{S}_8$ be the collection of sets (with six elements) that contain $\alpha_1$ and all elements of the set  $\{ \alpha_3,  \alpha_5, \alpha_7, \alpha_{14} \} $, and
 one of the $\alpha_a$s of the set $\{ \alpha_6$, $\alpha_9 \}$.
One example is $\{  \alpha_1, \alpha_3, \alpha_5, \alpha_7, \alpha_9,  \alpha_{14} \}$.

Let $\mathcal{S}_{9}$ be the collection of sets (with seven elements) that contain $\alpha_1$ and , $\alpha_{10}$:
\begin{itemize}
\item one or two elements of the set  $\{\alpha_2$, $\alpha_5\}$, and
\item all elements of the set $\{\alpha_7$, $\alpha_{14} \}$ or  $\{\alpha_8$, $\alpha_{14} \}$ or one or two elements of the set $\{\alpha_{12}$, $\alpha_{13}$, $\alpha_{14} \}$,.
\item one of the $\alpha_a$s of the set $\{\alpha_3$, $\alpha_4\}$, and
\item one of the $\alpha_a$s of the set $\{\alpha_6$, $\alpha_9\}$, and
\item one of the $\alpha_a$s whose representation has two gapped squares $\alpha_{11}.$
\end{itemize}
Here we have the restriction that if two elements of the set  $\{\alpha_2$, $\alpha_5$, $\alpha_{10}\}$ were selected, one element of the set $\{\alpha_{12}$, $\alpha_{13}$, $\alpha_{14} \}$ should also be chosen, and the other way around. Furthermore, if we choose $\alpha_2$ or $\alpha_5$ or $\alpha_{10}$ as the $\alpha_a$s with squares in a set, then $\alpha_3$, $\alpha_4$, and $\alpha_{14}$ should be selected as gapped squares, respectively, and in the first two conditions, $\alpha_{10}$ and $(\alpha_{12}$ or $\alpha_{13})$, and in the last condition, $(\alpha_{3}$ or $\alpha_{4})$ and $(\alpha_7$ or $\alpha_8$ or $\alpha_{12}$ or $\alpha_{13})$ should be  added to the set. The other restriction is that, if we have the numbers $ \alpha_7$, $\alpha_{14}$, then $\alpha_3$, and if we have the numbers $\alpha_4$, $\alpha_{14}$, then $(\alpha_8$ or $\alpha_{12})$ should be chosen as the gapped squares in a set. In the end, the union of the sets $\{ \alpha_6, \alpha_{10}, \alpha_{13}, \alpha_{14} \}$, and $\{ \alpha_{12}, \alpha_{13} \}$ is not allowed, and the following sets: $\{ \alpha_1, \alpha_3, \alpha_6, \alpha_8, \alpha_{10}, \alpha_{11}, \alpha_{14} \}$, $\{ \alpha_1, \alpha_4, \alpha_5, \alpha_6, \alpha_{10}, \alpha_{12}, \alpha_{14} \}$, $\{ \alpha_1, \alpha_4, \alpha_6, \alpha_7, \alpha_{10}, \alpha_{11}, \alpha_{14} \}$ are also exceptions that should not be considered.
A set fulfilling all the above is $\{ \alpha_1, \alpha_2, \alpha_3, \alpha_6, \alpha_{10}, \alpha_{11}, \alpha_{13} \}$. 


Let $\mathcal{S}_{10}$ be the collection of sets (with eight elements) that contain $\alpha_1$ and (only) one of the following sets: 
\begin{itemize}
\item $\alpha_3$ and one of the sets $\{ \alpha_5, \alpha_6, \alpha_{10}, \alpha_{11}, \alpha_{13}, \alpha_{14} \} $ or $\{ \alpha_5, \alpha_9, \alpha_{10}, \alpha_{11}, \alpha_{13}, \alpha_{14} \} $,  
\item $\{ \alpha_2, \alpha_4, \alpha_{13} \}$ and one of the sets $\{ \alpha_6, \alpha_{10}, \alpha_{11}, \alpha_{14} \} $ or $\{ \alpha_9, \alpha_{10}, \alpha_{11}, \alpha_{14} \} $.
\end{itemize}
For example, one possible such set is
$\{ \alpha_1, \alpha_3, \alpha_5, \alpha_6, \alpha_{10}, \alpha_{11}, \alpha_{13}, \alpha_{14} \}$. 

While the choice of these classes may seem, in a sense, arbitrary, we tried to group together in the same class sets of parameters, according to the common combinatorial features of the elements defining them. The way we obtained these sets is by computer exploration. The idea behind the definition is to generate unavoidable sets of parameters $\alpha_a$ which have a minimal cardinality. We basically started with the sets of size $5$, and tried to extend them one element at a time in order to obtain unavoidable sets. As such, we ensured that removing some element from them leads to an avoidable set of parameters, while, no matter what element we add to them preserves their unavoidability. The reason to start with sets of $5$ parameters is given in the next lemma (shown in the Appendix).

\begin{Lemma}\label{New-Theorem}
Let $K'\subset K$ be any subset of size at most $4$ of $K$. There exists an infinite word $w$ such that $w$ does not contain $4$-powers and if $w$ contains an instance of the pattern $x\pi^i{(x)}\pi^j{(x)}\pi^k{(x)}$ then it can not be modelled by any tuples of the set of patterns $K'$.
\end{Lemma}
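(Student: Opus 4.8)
\noindent\emph{Proof idea.}
I would first observe that the fourteen digit representations of Table~\ref{table_k} are exactly the fourteen set partitions of the positions $\{1,2,3,4\}$ in which position~$1$ heads its block, the one missing partition $0000$ being the shape of a $4$-power $uuuu$. Thus building $w$ as required amounts to forbidding in $w$ a prescribed list of at most five shapes of the factor $u\,f^i(u)\,f^j(u)\,f^k(u)$, one of which is $uuuu$. The plan is to split on $K'$ and treat it with three ingredients; the first two are uniform and dispose of almost all subsets.

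The first ingredient is the ternary square-free word $h$. Ten of the representations --- those of $\alpha_2,\alpha_5,\alpha_6,\alpha_7,\alpha_8,\alpha_9,\alpha_{10},\alpha_{11},\alpha_{12},\alpha_{13}$ --- force their instance to contain a square (for instance $0011$ begins with $uu$, $0122$ ends with $ww$, $0001$ and $0111$ contain a cube, $0101$ is the square $(uv)^2$, and $0110,0112$ carry the inner square $vv$), and a $4$-power contains one too; moreover the shape $0123$ of $\alpha_1$ needs four pairwise-distinct $x$-items, which is impossible over $\Sigma_3$ because $3<\alpha_1$ always. Hence $w=h$ settles every $K'$ disjoint from $\{\alpha_3,\alpha_4,\alpha_{14}\}$.

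The second ingredient is the Thue--Morse word $t$, taken over $\Sigma_2$. There the four $x$-items take at most two distinct values, so no shape with three or more digits occurs; $t$ is cube-free, hence has no $4$-powers and no instances of the cube shapes $0001,0111$; and by the property recorded around Table~\ref{table_k} (over $\Sigma_m$ with $m<\alpha_a$ nothing is modelled by $\alpha_a$) every $\le2$-digit shape whose parameter exceeds $2$ is also absent. Thus $w=t$ works unless $K'$ contains one of the $\le2$-digit non-cube shapes $\alpha_7,\alpha_8,\alpha_{10},\alpha_{11},\alpha_{12}$ with parameter value exactly $2$; and a short check on the parities of $(i,j,k)$ shows that for a fixed $(i,j,k)$ at most one of these five can equal $2$ (each forces a distinct parity pattern of $(i,j,k)$, and three of the eight parity patterns admit none of them).

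What remains are the subsets that meet $\{\alpha_3,\alpha_4,\alpha_{14}\}$ and contain the unique small shape $\delta\in\{\alpha_7,\alpha_8,\alpha_{10},\alpha_{11},\alpha_{12}\}$ attached to the parity class of $(i,j,k)$. Over $\Sigma_2$ every element of such a $K'$ other than $\delta$ is avoided automatically by a cube-free word (the members of $\{\alpha_3,\alpha_4,\alpha_{14}\}$ because their shapes use three digits, the rest by the two observations just made), so it suffices to exhibit, for each relevant parity class, one cube-free binary morphic word that in addition avoids the single short ``complemented repetition'' into which $\delta$ degenerates over $\Sigma_2$ --- namely one of $uu\bar uu$, $u\bar uuu$, $uu\bar u\bar u$, $u\bar uu\bar u$, $u\bar u\bar uu$ for a nonempty block $u$. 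I expect this to be where all the genuine work lies: a binary alphabet cannot avoid squares outright, so each of these (at most five) words must be found and verified individually by computer, and these words, grouped by the combinatorial features introduced before the lemma, constitute the bulk of the material deferred to the Appendix. The rest is bookkeeping.
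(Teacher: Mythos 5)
Your structural observations are sound and nicely done: the fourteen representations are indeed the restricted growth strings of length $4$ other than $0000$; the ten shapes you list do force a square, so the square-free word $h$ disposes of any $K'$ disjoint from $\{\alpha_3,\alpha_4,\alpha_{14}\}$ (and $\alpha_1$ is unrealisable over $\Sigma_3$ since every permutation of $\Sigma_3$ has all non-fixed letters of the same order $\le 3<\alpha_1$); and over $\Sigma_2$ the only realisable shape besides $0000$ is the one matching the parity pattern of $(0,i,j,k)$, so at most one ``complemented repetition'' survives. The five residual binary avoidance problems you isolate are genuine but solvable --- in fact they are exactly the binary instances of the pattern, whose avoidability in $\Sigma_2$ is already asserted by the cited result of \cite{TCS2018}, so you could close that part by citation rather than by five new computer searches. (Minor point: insisting the binary witnesses be \emph{cube-free} is stronger than necessary and possibly unsatisfiable in some of the five cases; $4$-power-freeness plus avoidance of the single degenerate shape is all the lemma requires, since the cube shapes $0001$, $0111$ are not realisable in the relevant parity classes.)

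The genuine problem is that your argument never uses the hypothesis $|K'|\le 4$. As written, it applies verbatim to \emph{every} subset of $K$ --- in particular to the five-element sets of $\mathcal{S}_1,\dots,\mathcal{S}_{10}$ that the paper proves unavoidable, and indeed to $K$ itself. There is no formal contradiction, because those sets are unavoidable only over alphabets $\Sigma_m$ with $m\ge\max(S)+1$, while your witnesses live over $\Sigma_2$ or $\Sigma_3$, where most parameters of $K'$ are not realisable at all and are therefore avoided vacuously. But this shows your proof cannot be establishing what the lemma is for: its stated role is to justify that minimal unavoidable parameter sets have at least five elements, which only makes sense if the witness word is taken over an alphabet in which the members of $K'$ can actually be modelled (roughly, $|\Sigma|\ge\max K'$, the same regime in which Theorem~\ref{unavoidable_ks} operates). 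Over such alphabets the collapses you rely on disappear --- $\alpha_3$, $\alpha_4$, $\alpha_{14}$, and in fact all fourteen shapes, can occur and must be excluded by the combinatorial structure of the word itself, as in the $\Sigma_5$ constructions of Theorem~\ref{AvoidableCases} --- and this is precisely where the cardinality bound $4$ and the real case analysis must enter. You should either restate the lemma with the intended alphabet constraint and redo the argument in that regime, or, if you keep the literal statement, note that it already follows in one line from the $\Sigma_2$-avoidability of all size-four unary patterns in \cite{TCS2018}, which makes the elaborate three-ingredient decomposition unnecessary.
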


The main result of this section is the following theorem.
\begin{theorem}\label{unavoidable_ks}
Given positive integers $i,j,k$ such that $i\neq  j\neq k\neq i$, consider the pattern $p=x\pi^i{(x)}\pi^j{(x)}\pi^k{(x)}$. Let $\sigma =\min \{\max(S)\mid S=S_{\ell}$ for some $\ell=1, \ldots, 10\}$. 
Then $\sigma\geq 4$ and $p$ is unavoidable in $\Sigma_m$, for all $m > \sigma$.
\end{theorem}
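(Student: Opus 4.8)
The plan is to show that over any alphabet $\Sigma_m$ with $m>\sigma$, every choice of word $x$ and morphic permutation $\pi$ produces an instance $uf^i(u)f^j(u)f^k(u)$ that is modelled by the parameters of \emph{some} set $S_\ell$, so that no infinite word over $\Sigma_m$ can avoid all of $S_1,\dots,S_{10}$ simultaneously, hence $p$ is unavoidable. The first step is the reduction: fix $i,j,k$, a morphic permutation $f$ of $\Sigma_m$, and an arbitrary word $u$; the pattern of equalities among $u,f^i(u),f^j(u),f^k(u)$ is governed entirely by the orbit structure of $f$ on the letters of $u$, and in particular by which of the relations $t\mid i$, $t\mid j$, $t\mid k$, $t\mid|i-j|$, $t\mid|i-k|$, $t\mid|j-k|$ hold for the orders $t=\ord_f(a)$ of letters $a$ occurring in $u$. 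Each such instance is therefore modelled by exactly one of the fourteen digit-representations in Table~\ref{table_k} — and which one depends only on the multiset of letter-orders realised in $u$. Since $f$ acts on $m>\sigma$ letters, there is freedom (but also constraints from $m$) in which orders can appear.

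The core step is a pigeonhole/exhaustion argument on the realisable digit-representations. For a fixed alphabet size $m$, the set of $\alpha_a$ that are ``active'' (i.e., $\alpha_a\le m$, so the corresponding equality pattern can actually be forced by a suitable $u,f$ in $\Sigma_m$) is a subset $A\subseteq K$; by definition of $\sigma$ as $\min\{\max S\mid S=S_\ell\}$, once $m>\sigma$ we have $\max S\le\sigma<m$ for at least one collection-member $S=S_\ell$, so that entire $S_\ell\subseteq A$. The claim is then that an infinite word over $\Sigma_m$ cannot avoid this $S_\ell$: by the structural features encoded in the definition of $\mathcal{S}_\ell$ (prefix/suffix squares, gapped squares, cubes, gapped cubes, and the swapped-form restrictions), the union of the instances modelled by the members of $S_\ell$ forces, in any sufficiently long word, either a $4$-power or an instance of $p$ of one of the remaining types — iterating, one shows no infinite word escapes. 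This is precisely the sense in which the $S_\ell$ were engineered to be minimally unavoidable: Lemma~\ref{New-Theorem} shows a set of $\le 4$ parameters is always avoidable together with $4$-powers, so five is the threshold, and the computer-verified case analysis behind the ten collections guarantees that each $S_\ell$, as a set of five-or-more parameters, \emph{is} unavoidable. The bound $\sigma\ge 4$ follows from the observation already made in the text that $\alpha_1>3$ and $\alpha_1\in S_\ell$ for every $\ell$, so $\max S_\ell\ge\alpha_1\ge 4$.

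The main obstacle is the unavoidability of each individual $S_\ell$: one must argue combinatorially that the disjunction of the equality-patterns in $S_\ell$ cannot be dodged by an infinite word, and this is where the delicate grouping of the collections (and the numerous side-restrictions in their definitions, e.g.\ ``if $\alpha_2$ is chosen then $\alpha_7$ must be added'') does its work. I would handle this by treating $S_\ell$ as defining a finite set of forbidden factor-types and showing, type by type, that forbidding all of them (together with $xxxx$) is impossible — either by exhibiting the contradiction directly from the square/cube structure of the digit representations, or by appealing to the computer search that validated these collections. A secondary subtlety is bookkeeping: verifying that for every $m>\sigma$ the containment $S_\ell\subseteq A$ really does hold for the minimising $\ell$, i.e.\ that $\alpha_a\le\sigma$ for each $\alpha_a\in S_\ell$ implies that the corresponding equality pattern is genuinely realisable in $\Sigma_m$ — this needs the definitions of the $\alpha_a$ as infima of divisibility conditions, so that $\alpha_a\le m$ supplies a letter-order $t\le m$ witnessing exactly the required pattern among $u,f^i(u),f^j(u),f^k(u)$.
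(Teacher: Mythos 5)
Your proposal takes essentially the same route as the paper's proof: first the observation that $m\geq\alpha_a$ makes the equality pattern of each $\alpha_a$ realisable (via a permutation whose orbit on the first letter of $u$ has length $\alpha_a$), hence for $m>\sigma$ the whole minimising set $S_\ell$ is active, and then the unavoidability of each individual $S_\ell$ is delegated to the computer-verified backtracking search, with $\sigma\geq 4$ following from $\alpha_1>3$. The paper does exactly this (its backtracking exhibits finite longest words avoiding each $S_\ell$, e.g.\ a word of length $36$ for $\{\alpha_1,\alpha_2,\alpha_4,\alpha_6,\alpha_7\}$), so your sketch matches both the structure and the point at which the argument becomes computational.
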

\begin{proof}
Because $m \geq  \alpha_1$, we have that for every word $u \in \Sigma^{+}_m$ there exists a morphic permutation $f$ such that every two words of $u, f^{i}(u), f^{j}(u), f^{k}(u)$ are different.  Indeed, we take $f$ to be a permutation such that the orbit of $u[1]$ is a cycle of length $\alpha_1$, which means that the first letters of $u, f^{i}(u), f^{j}(u)$ and $f^{k}(u)$ are pairwise different. Similarly, the fact that $m \geq \alpha_2$ (when $\alpha_2 \neq \infty$) means that for every $u \in \Sigma^{+}_m$ there exists a morphic permutation $f$ such that $f^{k}(u) \neq  u = f^{i}(u) \neq f^{j}(u)\neq f^k(u)$. In this case, we take $f$ to be a permutation such that $ \textbf{ord}_f(u[1])=\alpha_2$. We can derive similar observations for all the $\alpha_a$ parameters involved in the definition of $\sigma$. 


One can check with the aid of a computer, by a backtracking algorithm, that if $m\geq \max(S)+1$, when $S=S_{\ell}$ for some $\ell=1, \ldots, 10$, then $p$ is unavoidable in $\Sigma_m$.  Our computer program tries to construct a word as long as possible by always adding a letter to the current word (obtained by backtracking). This letter is chosen in all possible ways from the letters contained in the word already, or it may also be a new letter, and we just check whether it creates an instance of the pattern as a suffix of the word. Generally, we were not able to check if an arbitrary instance of the pattern is created, due to the complexity of checking all permutations as possible image of $\pi$. But, in most of the cases we need to check, we got the result even when we explicitly allowed $\pi$ to be only a cycle. In the remaining cases, we needed to allow $\pi$ to act as the identity on a symbol, and as a cycle on the rest of the alphabet. This latter case, which was still easy to check, is the reason why we got that $p$ is only avoidable over alphabets of size at least $\sigma+1$ and not already over an alphabet of size $\sigma$. 

For instance, looking at $\mathcal{S}_1$, using a computer program that explores all the possibilities by backtracking we obtained that if $m \geq \max \{ \alpha_1, \alpha_2, \alpha_4, \alpha_6, \alpha_7\}$,  which is at its turn greater or equal to $4$ as $\alpha_1 > 3$, the longest word that does not contain an instance of this pattern, even when constraining $\pi$ to be a cycle, has length
 $36$, and it is $010210210210033001133001133001133000$
(adding new letters to this word does not lead to a longer one). 
On the other hand, we found arbitrarily long words that contain instances of the pattern modelled by $\alpha_1, \alpha_2, \alpha_3, \alpha_6, \alpha_{10}, \alpha_{11}, \alpha_{12}$ when we allow $\pi$ to be replaced only by cycles. However, if we allow $\pi$ to be more general (i.e., only fix one symbol of the alphabet and be a cycle on the rest), we obtain that there are no infinite words that avoid the pattern in this case. So, over alphabets of size $m \geq \max \{ \alpha_1, \alpha_2, \alpha_3, \alpha_6, \alpha_{10}, \alpha_{11}, \alpha_{12}\}+1$ the pattern is unavoidable. 

All the sets of $\alpha_a$s that are used to define $\sigma$ are given in the Appendix. For some of them, we show the longest words that 
do not contain their instances (Table \ref{unavoidablecases1} in the Appendix); these words, as well as words for all the other cases, can be easily found by backtracking. 

Note that by the results (Theorem 4 and Theorem 6) of \cite{TCS2018} it also follows that $\sigma+1\geq 5$. 
 \qed
 
 \end{proof}
\section{Algorithm to generate avoidable cases}
In Lemma \ref{unavoidable_ks}, we proved that given the pattern $x \pi^{i}(x) \pi^j(x) \pi^{k} (x)$, for each $i$, $j$, and $k$, we can compute an upper bound on the minimum size of an alphabet over which the pattern is unavoidable. Now to show that this is the minimum cardinality over which the pattern of size four is unavoidable, we proceed as follows.

Let $K_a$ be the class that contains all nonempty sets of $\alpha_a$ parameters $S'$ such that $S'$ does not include any set $S=S_{\ell}$ for some $\ell=1, \ldots, 10$. In other words, $K_a$ contains all nonempty strict subsets of the sets $S=S_{\ell}$ for some $\ell=1, \ldots, 10$ as well as any other sets of parameters that do not include any of the sets $S=S_{\ell}$ for some $\ell=1, \ldots, 10$. We already know that all subsets of the sets $S=S_{\ell}$ for some $\ell=1, \ldots, 10$ are avoidable. Also, all supersets of the sets $S=S_{\ell}$ for some $\ell=1, \ldots, 10$ are unavoidable (as the sets $S$ already are unavoidable), so we try to show that all the other sets of parameters are avoidable. However, $K_a$ has about 1400 sets of patterns, so checking each of them is hard to be done by pen and paper.

Fortunately, there is an observation we can exploit at this point: all subsets of an avoidable set of parameters is avoidable as well.  For instance, if the set $\{\alpha_1, \alpha_2, \alpha_5, \alpha_6, \alpha_8, \alpha_{14}\}$ can be avoided by a word $\textbf{w}$, then the set $\{ \alpha_1, \alpha_2, \alpha_5 \}$ can also be avoided by $\textbf{w}$. Thus, we can look for the sets of parameters with maximal cardinality that belong to $K_a$ and are avoidable. Clearly, the entire $K$ is unavoidable. However, $K\setminus\{\alpha_1\}$ can be shown to be avoidable. Our approach is implemented in the following algorithmic scheme.

\begin{algorithm}
\caption{Algorithm to generate avoidable cases}\label{avoidable}
\begin{algorithmic}[1]
        \STATE Let $n=10.$ Using the sets $\mathcal{S}_i, (1 \leq i \leq 10)$, generate all sets of $\alpha_a$s of cardinality $n$, that have no unavoidable sets of patterns as subset; show that they are avoidable;
        \STATE {For all $n$ from $9$ down to $4$, generate all sets of cardinality $n$  that have no unavoidable sets of patterns as subset; these sets should not be subsets of the avoidable sets of $\alpha_a$s of cardinality $n+1$ (to avoid generating repetitive avoidable sets of cases generated in the past step); show that they are avoidable.}{} 
\end{algorithmic}
\end{algorithm} 

The following theorem states which sets of $\alpha_a$s can be avoided, according to the algorithm above, concluding thus our approach. It is worth noting that the search space was drastically reduced by our aproach.
\begin{theorem}\label{AvoidableCases}
For each of the following sets there exists an infinite word over an alphabet of size at most $5$, such that if this word contains an instance of $x\pi^i{(x)}\pi^j{(x)}\pi^k{(x)}$ then this instance can not be modelled by an element of the~set.
\begin{align*}
&\{\alpha_{2}, \alpha_{3}, \alpha_{4}, \alpha_{5}, \alpha_{6}, \alpha_{7}, \alpha_{8}, \alpha_{9}, \alpha_{10}, \alpha_{11}, \alpha_{12}, \alpha_{13}, \alpha_{14} \},\\
&\{ \alpha_{1}, \alpha_{2}, \alpha_{3},  \alpha_{4},  \alpha_{5}, \alpha_{6}, \alpha_{9}, \alpha_{10}, \alpha_{11}, \alpha_{14} \}, 
~\{ \alpha_{1}, \alpha_{2}, \alpha_{3}, \alpha_{4}, \alpha_{5}, \alpha_{6}, \alpha_{9}, \alpha_{10}, \alpha_{12},  \alpha_{14} \}, \\
&\{ \alpha_{1}, \alpha_{2}, \alpha_{3}, \alpha_{4}, \alpha_{5},  \alpha_{6},  \alpha_{9}, \alpha_{10}, \alpha_{13},  \alpha_{14} \}, 
~\{ \alpha_{1}, \alpha_{2}, \alpha_{3}, \alpha_{4}, \alpha_{5}, \alpha_{6}, \alpha_{9}, \alpha_{11}, \alpha_{12},  \alpha_{14} \}, \\
&\{ \alpha_{1}, \alpha_{2}, \alpha_{3}, \alpha_{4}, \alpha_{5},  \alpha_{6}, \alpha_{9}, \alpha_{11}, \alpha_{13}, \alpha_{14} \}, 
~\{ \alpha_{1}, \alpha_{2}, \alpha_{3}, \alpha_{4}, \alpha_{5}, \alpha_{7}, \alpha_{8}, \alpha_{11}, \alpha_{12},  \alpha_{14} \}, \\
&\{ \alpha_{1}, \alpha_{2}, \alpha_{3}, \alpha_{4}, \alpha_{5}, \alpha_{7}, \alpha_{8}, \alpha_{11},   \alpha_{13},  \alpha_{14} \}, 
~\{ \alpha_{1}, \alpha_{2}, \alpha_{3}, \alpha_{4}, \alpha_{5}, \alpha_{10}, \alpha_{11}, \alpha_{12}, \alpha_{13}, \alpha_{14} \},\\
&\{ \alpha_{1}, \alpha_{2}, \alpha_{4}, \alpha_{6}, \alpha_{8}, \alpha_{9}, \alpha_{10}, \alpha_{12} \},
~~~~~~~~~~~\{ \alpha_{1}, \alpha_{2}, \alpha_{4}, \alpha_{6}, \alpha_{9}, \alpha_{10}, \alpha_{11}, \alpha_{12} \},\\
&\{ \alpha_{1}, \alpha_{2}, \alpha_{4}, \alpha_{6}, \alpha_{8}, \alpha_{9}, \alpha_{11}, \alpha_{12} , \alpha_{14} \},
~~~~~~\{ \alpha_{1}, \alpha_{2}, \alpha_{4}, \alpha_{8}, \alpha_{10}, \alpha_{11}, \alpha_{12}, \alpha_{13} \},\\
&\{ \alpha_{1}, \alpha_{2}, \alpha_{4}, \alpha_{8}, \alpha_{10}, \alpha_{12}, \alpha_{13}, \alpha_{14} \},
~~~~~~~~~\{ \alpha_{1}, \alpha_{2}, \alpha_{4}, \alpha_{8}, \alpha_{11}, \alpha_{12}, \alpha_{13}, \alpha_{14} \},\\
&\{ \alpha_{1}, \alpha_{2}, \alpha_{4}, \alpha_{6}, \alpha_{8}, \alpha_{9}, \alpha_{10}, \alpha_{11}, \alpha_{13}, \alpha_{14} \},
\{ \alpha_{1}, \alpha_{2}, \alpha_{5}, \alpha_{6}, \alpha_{7}, \alpha_{8}, \alpha_{9}, \alpha_{10}, \alpha_{11}, \alpha_{12}, \alpha_{13}, \alpha_{14} \},  
\\&\{ \alpha_{1}, \alpha_{3}, \alpha_{4}, \alpha_{6},\alpha_{7}, \alpha_{8}, \alpha_{9}, \alpha_{10}, \alpha_{11}, \alpha_{12}, \alpha_{14} \},
\{ \alpha_{1}, \alpha_{3}, \alpha_{4}, \alpha_{6}, \alpha_{7}, \alpha_{8}, \alpha_{9}, \alpha_{10}, \alpha_{11}, \alpha_{13} \}, 
\\&\{ \alpha_{1}, \alpha_{3}, \alpha_{4}, \alpha_{6}, \alpha_{7}, \alpha_{8}, \alpha_{9}, \alpha_{11}, \alpha_{13}, \alpha_{14} \},
\{ \alpha_{1}, \alpha_{2}, \alpha_{4}, \alpha_{6}, \alpha_{8}, \alpha_{9}, \alpha_{10}, \alpha_{11}, \alpha_{13} \},\\
&\{ \alpha_{1}, \alpha_{3}, \alpha_{4}, \alpha_{6}, \alpha_{9}, \alpha_{10}, \alpha_{11}, \alpha_{13},  \alpha_{14} \},
~~~\{ \alpha_{1}, \alpha_{3}, \alpha_{4}, \alpha_{7}, \alpha_{8}, \alpha_{10}, \alpha_{11}, \alpha_{12} , \alpha_{13}\},
\\&\{ \alpha_{1}, \alpha_{3}, \alpha_{4}, \alpha_{7}, \alpha_{8}, \alpha_{10}, \alpha_{11}, \alpha_{13} , \alpha_{14}\},
~~~\{ \alpha_{1}, \alpha_{3}, \alpha_{4}, \alpha_{7}, \alpha_{8}, \alpha_{11}, \alpha_{12},  \alpha_{13} , \alpha_{14}\},
\\&\{ \alpha_{1}, \alpha_{3}, \alpha_{4}, \alpha_{7}, \alpha_{8}, \alpha_{10}, \alpha_{12}, \alpha_{13}, \alpha_{14} \}
\{ \alpha_{1}, \alpha_{3}, \alpha_{5}, \alpha_{6}, \alpha_{7}, \alpha_{9}, \alpha_{10}, \alpha_{12} \},
\\&\{ \alpha_{1}, \alpha_{3}, \alpha_{5}, \alpha_{6}, \alpha_{9}, \alpha_{10}, \alpha_{11}, \alpha_{12} \},
~~~~~~~~~\{ \alpha_{1}, \alpha_{3}, \alpha_{5}, \alpha_{7}, \alpha_{10}, \alpha_{11}, \alpha_{12}, \alpha_{13} \},\\
&\{ \alpha_{1}, \alpha_{3}, \alpha_{5}, \alpha_{7}, \alpha_{10}, \alpha_{12}, \alpha_{13}, \alpha_{14} \},
~~~~~~~~\{ \alpha_{1}, \alpha_{3}, \alpha_{5}, \alpha_{7}, \alpha_{11}, \alpha_{12}, \alpha_{13}, \alpha_{14} \},\\
&\{ \alpha_{1}, \alpha_{3}, \alpha_{5}, \alpha_{6}, \alpha_{7}, \alpha_{9}, \alpha_{10}, \alpha_{11}, \alpha_{13}, \alpha_{14} \}, 
\{ \alpha_{1}, \alpha_{3}, \alpha_{5}, \alpha_{6}, \alpha_{7}, \alpha_{9}, \alpha_{11}, \alpha_{12}, \alpha_{14} \}\\
&\{ \alpha_{1}, \alpha_{3}, \alpha_{7}, \alpha_{10}, \alpha_{11}, \alpha_{12}, \alpha_{13}, \alpha_{14} \},
~~~~~~~\{ \alpha_{1}, \alpha_{4}, \alpha_{6}, \alpha_{8}, \alpha_{9}, \alpha_{10}, \alpha_{12}, \alpha_{14} \},\\
&\{ \alpha_{1}, \alpha_{4}, \alpha_{6}, \alpha_{8}, \alpha_{9}, \alpha_{10}, \alpha_{13}, \alpha_{14} \},
~~~~~~~~~\{ \alpha_{1}, \alpha_{4}, \alpha_{8}, \alpha_{10}, \alpha_{11}, \alpha_{12}, \alpha_{13}, \alpha_{14} \}
\end{align*}

\end{theorem}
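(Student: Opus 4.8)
The plan is to prove Theorem~\ref{AvoidableCases} by exhibiting, for each of the listed sets $S$ of parameters $\alpha_a$, a concrete infinite word $w_S$ over an alphabet of size at most $5$ that avoids the pattern $x\pi^i(x)\pi^j(x)\pi^k(x)$ whenever the instance is modelled by an element of $S$. By the observation recorded just before the theorem statement, it suffices to handle the \emph{maximal} sets in the class $K_a$ that are avoidable; every listed set is of this maximal form, and once each is shown avoidable, all its subsets (hence all remaining sets of $K_a$) are avoidable by restriction of the same word. So the first step is to run Algorithm~\ref{avoidable}: starting from $n=10$ and descending to $n=4$, generate all sets of parameters of cardinality $n$ that contain no $S_\ell$ ($1\le\ell\le 10$) as a subset and are not subsets of a previously produced avoidable set, and for each produce a candidate witness word.

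The second step is to produce the witness words themselves. I would generate each $w_S$ as a fixed point (or morphic image of a fixed point) of a uniform or near-uniform morphism over $\Sigma_2$, $\Sigma_3$, $\Sigma_4$, or $\Sigma_5$, found by computer search; the guiding heuristics are the combinatorial features encoded in the digit representations of Table~\ref{table_k} (prefix/suffix squares, gapped squares, cubes, gapped cubes). For the smaller alphabets one expects variants of the Thue--Morse word $t$ and the ternary square-free word $h$ introduced in the Preliminaries to do the job for several sets, since avoiding $\alpha_1$-modelled instances is essentially avoiding a $4$-power pattern with all $x$-items distinct, and avoiding cubes or squares kills many of the $\alpha_a$ with repeated digits. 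The third step, for each candidate $w_S$, is verification: one must check that no factor of $w_S$ is an instance $uf^i(u)f^j(u)f^k(u)$ with $f$ a morphic permutation such that the equality pattern among $u,f^i(u),f^j(u),f^k(u)$ matches the digit representation of some $\alpha_a\in S$. Since $i,j,k$ are symbolic, this is checked uniformly: for each $\alpha_a\in S$ one analyses which letter-orbit-length constraints on $f$ realise that equality pattern (exactly as in the proof of Theorem~\ref{unavoidable_ks}, where e.g. $\alpha_2$ corresponds to $\ord_f(u[1])=\alpha_2$), and then shows $w_S$ contains no corresponding factor; the argument is finitized by the standard technique of bounding the period/length of a forbidden factor and appealing to the morphism's structure (e.g. a synchronisation or ``bounded block'' lemma for the fixed point).

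I expect the main obstacle to be the verification step, specifically handling the function variable $\pi$ in full generality rather than merely as a cycle. As the authors already note in the discussion after Theorem~\ref{unavoidable_ks}, some cases genuinely require $\pi$ to fix one symbol and act as a cycle on the rest, and this distinction is exactly what separates avoidability over $\Sigma_\sigma$ from avoidability over $\Sigma_{\sigma+1}$; so for the borderline sets (those whose witness lives over $\Sigma_5$) one must be careful to argue the absence of instances for \emph{every} morphic permutation of the $5$-letter alphabet, not just cyclic ones. This means that for each listed set and each $\alpha_a$ in it, I would enumerate the conjugacy classes of permutations of $\Sigma_5$ compatible with the required equality pattern (there are finitely many cycle types), translate each into a finite automaton or pattern-matching condition on factors of $w_S$ of bounded length, and check emptiness. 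The bookkeeping is heavy — there are on the order of thirty sets, several of size ten or more — which is why, as stated in the paper, the bulk of these routine but voluminous checks (the explicit words and the per-set verifications) is deferred to the Appendix, while the argument above is the template applied uniformly to each entry in the list.
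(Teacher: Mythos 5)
Your proposal follows essentially the same route as the paper: the paper's proof exhibits, for the one set treated in detail, an explicit $16$-uniform morphic image $\alpha(h)$ of the ternary Thue word over $\Sigma_5$, checks by computer that no modelled instance with $|u|<31$ occurs, and then uses exactly the synchronisation argument you describe (longer factors must contain a full letter-image, and the periodic placement of $0$ in $\alpha(0)$ and $2$ in $\alpha(2)$ forces alignments in which some letter would have to be mapped to two distinct letters), with the remaining sets deferred to appendix lemmas. The only substantive difference is that you leave the witness words to be found by search rather than exhibiting them, but your verification template is the one the paper actually uses.
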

\begin{proof}
We only show the statement for the set $T=\{  \alpha_{2}, \alpha_{3}, \alpha_{4}, \alpha_{5}, \alpha_{6}, \alpha_{7}, \alpha_{8}, \alpha_{9},$ $ \alpha_{10},\alpha_{11}, \alpha_{12}, \alpha_{13}, \alpha_{14} \}.$
The other cases can be proved in a similar fashion. Words avoiding them are given in Appendix in Lemmas $2-37$. 

Let ${h}_{\alpha}=\alpha({h})$, where $\alpha : \Sigma_{3}^{*} \to \Sigma_{5}^{*} $ is the morphism defined by \\
\centerline{
$0 \to 0123041203410234, ~~1 \to 0132403124302134, ~~2 \to 0123402134201324.$
}

We show that if ${h}_{\alpha}$ contains an instance of the pattern $x\pi^i{(x)}\pi^j{(x)}\pi^k{(x)}$ then this instance can not be modelled by any tuple of the set $T$. Assume, for the sake of contradiction, that ${h}_{\alpha}$ contains a factor of the form $uf^i(u)f^j(u)f^k(u)$ which can be modelled by any of the $\alpha_a\in T$ (with $f$ morphic permutation). The maximum length of a factor of ${h}_{\alpha}$ that does not contain a full image of any letter of the ternary Thue word under $\alpha$ is $30$. Using a computer program we checked that ${h}_{\alpha}$ has no factor of the form $uf^i(u)f^j(u)f^k(u)$ with $|u|< 31$ which can be modelled by  any of the $\alpha_a\in T$. Further, if $u$ is a word of length $\geq 31$, each of the factors $u, f^i(u), f^j(u), f^k(u)$ contains a full image of a letter of ${h}$. If all these factors contain only the image of $1$ then we get a contradiction, as it would mean that ${h}$ contains a square (either $11$ or a longer square whose image covers $uf^i(u)f^j(u)f^k(u)$, see Appendix). If one of them contains the image of $0$ or $2$ we proceed as follows. Note that the letters $0$ in the image of $0$ under $\alpha$ and the letters $2$  in the image of $2$ occur repeatedly four times, with $3$ symbols between them. So, in one of $u, f^i(u), f^j(u), f^k(u)$, we will have either the image of $0$ under $\alpha$, or the image of $2$ under $\alpha$, and, consequently, four occurrences of $0$, with $3$ other symbols between two consecutive $0$s, or, respectively, four occurrences of $2$, with three other symbols between two consecutive $2$s. Consequently, the four occurrences of $0$ or $2$ should be aligned to four occurrences of another symbol, when considering the alignment of the factors $u, f^i(u), f^j(u), f^k(u)$. Thus, if at least one of the $f^i,f^j,$ or $f^k$ is not the identity, in $uf^i(u)f^j(u)f^k(u)$, based on the repetition of the letters $0$ in the image of $0$, we should have one of the following alignments:
$0123041203410234$ aligned with 
$0412034102340132$ (a contradiction, because this would mean that there is a function mapping $3$ to both $2$ and $3$);
$01230412034102340$ aligned with 
$04120341023401234$ (a contradiction, because this would mean that there is a function mapping $0$ to both $0$ and $4$);
$0123041203410234$ aligned with 
$3240312430213401$ (a contradiction, because this would mean that there is a function mapping $1$ to both $1$ and $4$);
$0123041203410234012$ aligned with 
$2340213420132401230$ (a contradiction, because this would mean that there is a function mapping $0$ to both $2$ and $3$);
$01230412034102340132403124302$ aligned with 
$23402134201324012304120341023$ (a contradiction, because this would mean that there is a function mapping $3$ to both $1$ and $2$);
$01230412034102340$ aligned with 
$23402134201324013$ (a contradiction, because this would mean that there is a function mapping $3$ to both $0$ and $1$);
$01230412034102340$ aligned with 
$21342013240123041$ (a contradiction, because this would mean that there is a function mapping $4$ to both $0$ and $4$). We can apply the same reasoning for the alignments based on the repetition of the letters $2$ in the image of $2$, and get again only contradictions. Therefore, no instance of the pattern is contained in ${h}_{\alpha}$. This concludes our~proof. \qed

\end{proof}

We can now show the main theorem of this paper: 
\begin{theorem}
Given a pattern $p=x\pi^i(x)\pi^j(x)\pi^k(x)$ we can determine effectively the value $\sigma$, such that $p$ is avoidable in $\Sigma_m$ for $m\leq \sigma-1$ and unavoidable in $\Sigma_m$ for $m\geq \sigma+1$. 
\end{theorem}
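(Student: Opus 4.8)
The plan is to combine the two previous theorems into a decision procedure. Given $p=x\pi^i(x)\pi^j(x)\pi^k(x)$, first compute the fourteen parameters $\alpha_1,\dots,\alpha_{14}$ from $i,j,k$ according to Table~\ref{table_k}; each is either $\infty$ or a concrete integer obtained by a finite search over divisors, so this step is effective. (If two of $i,j,k$ coincide, the pattern degenerates to a cubic pattern with permutations and the value $\sigma$ is furnished by the characterisation of~\cite{MMN12}; so assume $i\neq j\neq k\neq i$.) Then form the ten collections $\mathcal{S}_1,\dots,\mathcal{S}_{10}$ — each is a finite, explicitly described family of subsets of $K=\{\alpha_1,\dots,\alpha_{14}\}$ — and set
\[
\sigma \;=\; \min\bigl\{\max(S)\mid S=S_\ell\text{ for some }\ell=1,\dots,10\bigr\},
\]
exactly as in Theorem~\ref{unavoidable_ks}. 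Since the $\mathcal{S}_\ell$ are finite and each $\max(S)$ is computable from the $\alpha_a$'s (with the convention $\max$ of a set containing $\infty$ being $\infty$), $\sigma$ is computable; and $\sigma\geq 4$ by the remark that $\alpha_1>3$.

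Next I would invoke Theorem~\ref{unavoidable_ks} directly: for every $m>\sigma$ the pattern $p$ is unavoidable in $\Sigma_m$. For the avoidability side, the point is that if $m\leq\sigma-1$, then every set $S_\ell$ has $\max(S_\ell)>m$ for at least one $\ell$ with $\max(S_\ell)=\sigma$... more carefully: for $m\le\sigma-1$, the set $\Lambda_m=\{\alpha_a : \alpha_a\le m\}$ of parameters that are actually "realisable" over $\Sigma_m$ (in the sense made precise in the discussion after Table~\ref{table_k}: $\Sigma_m$ admits instances modelled by $\alpha_a$ only when $m\ge\alpha_a$) cannot contain any full $S_\ell$, because $\max(S_\ell)\ge\sigma>m$ for all $\ell$. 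Hence $\Lambda_m$ is one of the "avoidable" sets of parameters, i.e. it contains no unavoidable $S_\ell$ as a subset, so $\Lambda_m\in K_a$. By Theorem~\ref{AvoidableCases} together with Algorithm~\ref{avoidable} (and the monotonicity observation that any subset of an avoidable set of parameters is avoidable), there is an infinite word over an alphabet of size at most $5$ — and, after restricting attention to the relevant sub-alphabet, over $\Sigma_m$ itself — avoiding every instance of $p$ modelled by a parameter in $\Lambda_m$. Since no instance of $p$ over $\Sigma_m$ can be modelled by a parameter outside $\Lambda_m$, this word avoids $p$ in $\Sigma_m$, proving avoidability for all $m\le\sigma-1$.

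Finally I would note that the whole procedure is effective: computing the $\alpha_a$'s, enumerating the finitely many sets in each $\mathcal{S}_\ell$, taking the minimum of their maxima, and reading off $\sigma$ are all finite computations, and the only genuine content — that the $S_\ell$ are unavoidable and that the maximal sets not containing any $S_\ell$ are avoidable — is supplied by Theorems~\ref{unavoidable_ks} and~\ref{AvoidableCases}. The main obstacle is not in this wrap-up argument but lies upstream: one must be sure that the ten collections $\mathcal{S}_\ell$ together with Algorithm~\ref{avoidable} genuinely exhaust all minimal unavoidable sets of parameters (so that "not containing any $S_\ell$" is equivalent to "avoidable"), and that the list of witness words in Theorem~\ref{AvoidableCases} and the Appendix covers every maximal avoidable case; this exhaustiveness is precisely what the computer search establishes. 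A secondary subtlety to handle cleanly is the boundary case $m=\sigma$, which (as already flagged in the paper, because of the "identity on one symbol" phenomenon) is left to be analysed individually and is therefore deliberately excluded from the statement.
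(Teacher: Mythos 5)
Your overall route is the same as the paper's: compute the $\alpha_a$'s from Table~\ref{table_k}, set $\sigma=\min\{\max(S)\mid S=S_\ell\}$, invoke Theorem~\ref{unavoidable_ks} for $m\geq\sigma+1$, and for the avoidability direction observe that the set of parameters realisable over $\Sigma_m$ (your $\Lambda_m$, the paper's $A_0$) contains no complete $S_\ell$ --- since $\max(S_\ell)\geq\sigma>m$ --- and is therefore contained in one of the avoidable sets of Theorem~\ref{AvoidableCases}. The paper merely phrases this step as a proof by contradiction where you argue directly; both versions lean on the same exhaustiveness of the computer search (that every set containing no $S_\ell$ is a subset of one of the listed avoidable sets), which you correctly identify as the real content lying upstream.

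The one genuine gap is the treatment of small alphabets. The witness words supplied by Theorem~\ref{AvoidableCases} live over alphabets of size up to $5$, so the argument that such a word avoids $p$ in $\Sigma_m$ only works for $m\geq 5$: an infinite word over five letters is not a word over $\Sigma_2$, $\Sigma_3$ or $\Sigma_4$, and your parenthetical ``after restricting attention to the relevant sub-alphabet'' does not produce one --- nothing guarantees the witness uses only $m$ letters. Since the claimed range of avoidability starts at $m=2$ and $\sigma-1\geq 3$, the cases $m\in\{2,3,4\}$ must be handled separately. The paper does this in its first sentence by citing \cite{DLT2015,TCS2018}, which establish that every unary pattern of size four with morphic permutations is avoidable in $\Sigma_2$, $\Sigma_3$ and $\Sigma_4$; with that citation added (and with your already-present reduction of the degenerate cases $i=j$, $j=k$, $i=k$), your argument closes completely.
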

\begin{proof}
By \cite{DLT2015,TCS2018}, we get that all the unary patterns of size $4$ with permutations are avoidable in $\Sigma_m$ for $m\in\{2,3,4\}$. If $i=j$ or $j=k$ then all the instances of the pattern contain squares, so the pattern is avoidable in $\Sigma_m$ for all $m\geq 3$. If $i=k$, then the pattern is avoidable in $\Sigma_m$, for all $m\geq 3$, according to the results of \cite{MMN12}, where it is shown that $\pi^i(x)\pi^j(x)\pi^i(x)$ is avoidable in such alphabets. 

Let us thus assume that $i\neq j$, $i\neq k$, and $j\neq k$ (which is also the setting of Theorem \ref{unavoidable_ks}). We compute the parameters $\alpha_a$, with $1\leq a\leq 14$, for the given pattern. Then, we consider the sets $S_i$, with $1\leq i\leq 10$, and compute $\sigma=\min\{\max(S)\mid S=S_{\ell}$ for some $\ell=1, \ldots, 10 \}.$ By Theorem \ref{unavoidable_ks} we get that $x\pi^i(x)\pi^j(x)\pi^k(x)$ is unavoidable in $\Sigma_m$, for $m\geq \sigma+1$. Let now $S'=S_{\ell}$ for some $\ell=1, \ldots, 10$ be a set such that $\max(S')=\sigma$. Assume that there exists $\ell\geq 5$ such that $x\pi^i(x)\pi^j(x)\pi^k(x)$ is unavoidable in $\Sigma_\ell$ and $\ell<\sigma$. Let $A_0$ be the set containing all $\alpha_a$ parameters which are at most $\ell$, or, in other words, let $A_0$ be the maximal subset (with respect to inclusion) of $K$ such that if $\alpha\in A_0$ then $\alpha\leq \ell$. Clearly, $A_0$ is either a strict subset of a set $S=S_{\ell}$ for some $\ell=1, \ldots, 10$ or $A_0$ is incomparable to any of the sets of $S$. It cannot include any set $S''=S_{\ell}$ for some $\ell=1, \ldots, 10$ as then $\max(S'')<\max(S')=\min\{\max(S)\mid S=S_{\ell}$ for some $\ell=1, \ldots, 10\}$, a contradiction. Thus $A_0$ is included in one of the sets from the statement of Theorem \ref{AvoidableCases}. Consequently, there exists an infinite word $w$ over a five letter alphabet that avoids $A_0$. In fact, $w$ avoids $A_0$ over all alphabets $\Sigma_m$ such that the instances of $p$ over $\Sigma_m$ correspond only to $\alpha_a$s contained in $A_0$. This means that $w$ avoids $A_0$ in $\Sigma_m$ with $5\leq m\leq \ell$. So, $p$ is avoidable in $\Sigma_\ell$, a contradiction.

In conclusion, the pattern $p=x\pi^i(x)\pi^j(x)\pi^k(x)$ is avoidable in $\Sigma_m$ when $2\leq m< \sigma$. This concludes our proof.
\qed\end{proof}

We get the next corollary, by taking, in the setting of the previous theorem, $\delta=\sigma$, if $x\pi^i(x)\pi^j(x)\pi^k(x)$ is avoidable in $\Sigma_\sigma$, or $\delta=\sigma-1$, otherwise.
\begin{corollary}
Given a pattern $p=x\pi^i(x)\pi^j(x)\pi^k(x)$ there exists $\delta$ a natural number or $+\infty$, such that $p$ is avoidable in $\Sigma_m$ for $m\in \{2,3,\ldots,\delta\}$ and unavoidable in $\nat\setminus \{2,3,\ldots,\delta\}$.
\end{corollary}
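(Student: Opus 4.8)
The plan is to read the corollary off the main theorem together with a single case distinction at the borderline alphabet size. By the main theorem, for a fixed pattern $p=x\pi^i(x)\pi^j(x)\pi^k(x)$ we can effectively compute a value $\sigma$ for which $p$ is avoidable in $\Sigma_m$ whenever $2\le m\le \sigma-1$ and unavoidable in $\Sigma_m$ whenever $m\ge \sigma+1$. The only alphabet size whose status is not yet settled is $m=\sigma$ itself. Hence the natural move is: put $\delta=\sigma$ if $p$ is avoidable in $\Sigma_\sigma$, and $\delta=\sigma-1$ otherwise. In either case the set of alphabet sizes over which $p$ is avoidable is exactly the initial segment $\{2,3,\ldots,\delta\}$, and its complement in $\nat$ is precisely the set of sizes over which $p$ is unavoidable, which is the assertion of the corollary. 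The effectivity claim is inherited from the effectivity of computing $\sigma$ (and of deciding the single case $m=\sigma$).

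First I would dispose of the degenerate patterns. If $i=j$ or $j=k$, every instance of $p$ contains a square; and if $i=k$, then $p=x\pi^i(x)\pi^j(x)\pi^i(x)$, which by \cite{MMN12} is avoidable in every $\Sigma_m$ with $m\ge 3$. Combined with the avoidability of all size-four unary patterns with permutations in $\Sigma_2$ (from \cite{DLT2015,TCS2018}), in all these cases $p$ is avoidable for every $m\ge 2$, so we set $\delta=+\infty$. The same conclusion applies whenever the computed $\sigma$ is infinite (i.e.\ every set $S_\ell$, $1\le\ell\le 10$, contains a parameter $\alpha_a$ equal to $\infty$), since then the ``avoidable for $m\le\sigma-1$'' clause of Theorem~\ref{unavoidable_ks} already covers all finite $m\ge 2$. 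This is exactly why the statement must allow $\delta=+\infty$.

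Assuming now that $i,j,k$ are pairwise distinct and $\sigma$ is finite, I would invoke Theorem~\ref{unavoidable_ks} for the direction ``unavoidable for $m\ge\sigma+1$'' and Theorem~\ref{AvoidableCases} (via the argument in the proof of the main theorem, which shows the maximal relevant set of $\alpha_a$-tuples realizable in $\Sigma_m$ is contained in one of the listed avoidable sets) for the direction ``avoidable for $2\le m\le\sigma-1$''. It then remains only to decide whether $p$ is avoidable in $\Sigma_\sigma$: run the same backtracking search as in the proof of Theorem~\ref{unavoidable_ks} but over $\Sigma_\sigma$ — if it terminates with a longest admissible word, $p$ is unavoidable there and $\delta=\sigma-1$; if one of the morphic words of the Appendix (or a small variant) is verified to avoid every $\alpha_a$-tuple that $\Sigma_\sigma$ can realize, then $p$ is avoidable there and $\delta=\sigma$.

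The hard part is precisely this borderline case $m=\sigma$. Unlike the clean split ``$\le\sigma-1$ avoidable / $\ge\sigma+1$ unavoidable'', at $m=\sigma$ one cannot restrict $\pi$ to cycles: as remarked in the proof of Theorem~\ref{unavoidable_ks}, one must allow $\pi$ to act as the identity on some letter and as a cycle on the rest, which is exactly what separates unavoidability at $\sigma+1$ from the (possibly still avoidable) situation at $\sigma$. Consequently the answer genuinely depends on the arithmetic of $i,j,k$ — namely on which $\alpha_a$ are finite and equal to $\sigma$ — and must be analysed individually for those patterns. Everything else in the proof (computing $\sigma$, the two unconditional directions, and the ``initial segment'' shape of the avoidable range) is immediate from the earlier results.
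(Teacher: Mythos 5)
Your proposal matches the paper's own derivation: the corollary is obtained exactly by setting $\delta=\sigma$ if $p$ is avoidable in $\Sigma_\sigma$ and $\delta=\sigma-1$ otherwise, with the main theorem supplying the two unconditional directions and $\delta=+\infty$ covering the degenerate/always-avoidable cases. The extra material on effectively deciding the borderline case $m=\sigma$ is not needed for the corollary (which only asserts existence of $\delta$), but it does not affect the correctness of the argument.
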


\section{Conclusions}
We have shown how to compute, given a pattern $x\pi^i(x)\pi^j(x)\pi^k(x)$, a rather precise approximation of the size of the alphabets where this pattern is avoidable. More importantly, we show that the sizes of these alphabets form an interval of integers. Our results extend the results of \cite{MMN12} and \cite{TCS2018}. The method we used is to explore the number theoretic connections between $i,j,$ and $k$, in relation to the possible orders the permutation $\pi$ may have. This approach follows the initial ideas of \cite{MMN12}, but requires a much more careful and deeper analysis. Essentially, while the relations between $i$ and $j$ in a cubic pattern $x\pi^i(x)\pi^j(x)$ can be modelled with four parameters only, in the case of $x\pi^i(x)\pi^j(x)\pi^k(x)$ we have $14$ such parameters. Exhaustively analysing all the possible relations between these parameters, as it was done in \cite{MMN12}, would take too long, so we devised a less complex way of exploring them. We basically see, on the one hand, which minimal combinations (in the sense of cardinality) of such parameters lead to the conclusion that the pattern is unavoidable in alphabets of large enough size, while also looking for the maximal combinations of the parameters that lead to the conclusion that the pattern is avoidable in alphabets of small enough size. This approach produced a rather large, but still tractable, case analysis.

In order to extend our results to arbitrarily long unary patterns with permutations, we expect that a valid approach would still be based on defining similar sets of parameters and exploring their combinatorial properties. However, it is to be expected that a direct generalization of the ideas above would lead to a number of parameters which grows exponentially with the length of the pattern, hence to a way too complex exploration in the end.

\newpage 
\bibliographystyle{splncs03}
\bibliography{patterns}
\clearpage

\end{document}